\theoremstyle{plain}
\newtheorem{theorem}{Theorem}
\newtheorem{lemma}[theorem]{Lemma}
\newtheorem{corollary}[theorem]{Corollary}
\title{Hamiltonian-connectedness of triangulations with few separating triangles}
\author{Nico Van Cleemput\\
\small Department of Applied Mathematics, Computer Science and Statistics,\\
\small Ghent University, Krijgslaan 281 - S9 - WE02, 9000 Ghent, Belgium\\
\small\tt nico.vancleemput@gmail.com
}
\begin{document}

\maketitle

\begin{abstract}
We prove that 3-connected triangulations with at most one separating triangle are hamiltonian-connected. In order to show bounds on the strongest form of this theorem, we proved that for any \(s\geq4\) there are 3-connected triangulation with \(s\) separating triangles that are not hamiltonian-connected. We also present computational results which show that all `small' 3-connected triangulations with at most 3 separating triangles are hamiltonian-connected.

\bigskip\noindent \textbf{Keywords:} triangulation; maximal planar graph; hamiltonian-connected; decomposition tree
\end{abstract}

\section{Introduction}

Plane triangulations -- sometimes also called maximal planar graphs -- are plane graphs where all faces, including the outer face, are triangles. A hamiltonian cycle, resp. path, is a cycle, resp. path, that visits each vertex exactly once. A graph is hamiltonian if it contains a hamiltonian cycle. A graph is hamiltonian-connected if for every pair of vertices there exists a hamiltonian path having these two vertices as end points. Clearly, being hamiltonian-connected implies being hamiltonian.

In 1931, Whitney \cite{Wh:31} proved that every 4-connected plane triangulation is hamiltonian. The condition of being 4-connected was later relaxed by Chen who proved that 3-connected triangulations with only one separating triangle are hamiltonian \cite{Ch:03}. An even stronger result of this form is given by Jackson and Yu \cite{JY:02}. They show that a nonhamiltonian triangulation must contain at least 4 separating triangles and even then they have to be in a specific configuration.

A stronger result of a different form was proved by Thomassen who showed that every 4-connected plane graph is hamiltonian-connected \cite{Th:83}. Ozeki and Vr\'ana \cite{OV:14} recently proved an even stronger result in function of a property called \emph{\(k\)-edge-hamiltonian-connected}. A graph \(G\) is \(k\)-edge-hamiltonian-connected if for any \(X \subset \{x_1x_2 : x_1, x_2 \in V(G), x_1 \neq x_2 \}\) such that \(1 \leq |X| \leq k\) and the graph
induced by \(X\) on \(V(G)\) is a forest in which each component is a path, \(G \cup X\) has a hamiltonian cycle containing all edges in \(X\), where \(G \cup X\) is the graph obtained from \(G\) by adding all edges in \(X\). Ozeki and Vr\'ana proved the following theorem.

\begin{theorem}[\cite{OV:14}(3)]\label{thm:OZ:14}
Every 4-connected plane graph is 2-edge-hamiltonian-connected.
\end{theorem}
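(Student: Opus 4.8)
\medskip
\noindent\emph{Proof proposal.} The plan is to separate the trivial part from the real content. The case \(|X|=1\) of Theorem~\ref{thm:OZ:14} is immediate from Thomassen's theorem that every \(4\)-connected plane graph is hamiltonian-connected: writing \(X=\{xy\}\), take a hamiltonian \(x\)--\(y\) path \(P\) of \(G\); then \(P+xy\) is a hamiltonian cycle of \(G\cup X\) containing \(xy\) (and it already lies in \(G\) when \(xy\in E(G)\)). So the work is in the case \(|X|=2\), where \(X\) is either two independent edges or a path of length \(2\). Here hamiltonian-connectedness alone is too weak, because the reductions below produce graphs that are only \(3\)-connected, and \(3\)-connected plane graphs need not even be hamiltonian.

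I would go through Tutte's circuit-graph machinery. Recall a \emph{circuit graph} is (roughly) a \(2\)-connected plane graph \(G\) with a distinguished facial cycle \(C\) such that no \(2\)-cut of \(G\) separates a vertex from \(C\); the engine is a strengthened \emph{Tutte-path lemma}: if \(G\) is a circuit graph with outer cycle \(C\) and we prescribe two vertices \(u,w\in V(C)\) (or an edge of \(C\)) together with a bounded number of further prescribed edges of \(G\), then \(G\) has a \(u\)--\(w\) path \(P\) through all the prescribed edges such that every bridge (\(C\)-component of \(P\)) has at most three vertices of attachment, with at most two for a bridge that meets \(C\) or contains a prescribed edge. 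If moreover \(G\) is \(4\)-connected, a bridge of \(P\) with an interior vertex and at most three attachments would be a \(\le 3\)-cut of \(G\); hence every bridge is a single edge and \(P\) is a hamiltonian path. The lemma itself would be proved in the classical way, by induction on \(|V(G)|\): if \(G\) has a \(2\)-cut \(\{a,b\}\), split \(G\) along it into smaller circuit graphs, distribute the prescribed vertices and edges among the parts, and apply the induction hypothesis; otherwise analyse the structure at the outer cycle directly.

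Granting the lemma, the case \(|X|=2\) splits according to how many edges of \(X\) already lie in \(E(G)\) and whether the two edges share a vertex. If \(X\) is a path \(x_1v,\,vx_4\), then in any hamiltonian cycle of \(G\cup X\) using both edges the vertex \(v\) is traversed exactly through them, so I would \emph{suppress} \(v\): pass to the (still plane) graph \(G'=(G-v)\cup\{x_1x_4\}\) and ask only for a hamiltonian cycle of \(G'\) through the single edge \(x_1x_4\), then lift it back by reinserting \(v\). If the two edges of \(X\) are independent and both lie in \(E(G)\), or exactly one does, this is a hamiltonian cycle through two prescribed edges, resp. a hamiltonian \(x_3\)--\(x_4\) path through a prescribed edge — each a single application of the lemma. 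The remaining subcase, two independent edges neither of which is in \(E(G)\), asks for a \emph{spanning \(2\)-linkage} of \(G\): two vertex-disjoint paths with prescribed endpoints whose union spans \(V(G)\); I would obtain this by two nested applications of the lemma — first a Tutte path realising one prescribed connection, then the second connection routed through the resulting bridges while keeping everything spanning.

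The main obstacle is twofold and concentrated in the lemma. First, getting the inductive statement strong enough: one must choose exactly which vertices and edges are allowed to be prescribed so that the prescribed data can always be coherently distributed across a \(2\)-cut, which typically forces the hypothesis to be more general (and more fiddly) than what the final corollary needs. Second, in the suppression and spanning-\(2\)-linkage steps the graph handed to the lemma is obtained from \(G\) by deleting a vertex and/or adding an edge and is only \(3\)-connected, so one must carry the attachment counts through those operations and verify that in the original \(4\)-connected \(G\) no bridge with an interior vertex survives — the delicate case being a bridge whose attachment set contains the ends of a suppressed or added edge. The planar case analysis, though long, is routine once the lemma is correctly stated.
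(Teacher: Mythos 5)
First, a point of context: the paper does not prove this statement at all --- it is Theorem~\ref{thm:OZ:14}, imported verbatim from Ozeki and Vr\'ana \cite{OV:14} and used as a black box in the proof of Theorem~\ref{thm:sep_tri_share_edge}. So there is no in-paper proof to compare yours against; what follows measures your sketch against what a proof of the cited result actually requires.

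Your overall strategy (Thomassen for \(|X|=1\), Tutte-path/circuit-graph machinery for \(|X|=2\)) is the right one and is the route taken in \cite{OV:14}, but there are two genuine gaps. The first is a mischaracterization of the hardest case: if \(X=\{x_1x_2,\,x_3x_4\}\) consists of two independent pairs, a hamiltonian cycle of \(G\cup X\) through both added edges decomposes, after deleting those edges, into two disjoint paths spanning \(V(G)\), \emph{each joining one vertex of \(\{x_1,x_2\}\) to one vertex of \(\{x_3,x_4\}\)} --- not into a \(2\)-linkage realising the connections \(x_1\)--\(x_2\) and \(x_3\)--\(x_4\) as you describe. This matters: the prescribed-terminal \(2\)-linkage you ask for can fail to exist even in a \(4\)-connected planar graph (when the four terminals interleave on a common face, by Seymour's two-paths theorem), whereas the correct cross-matched structure leaves you free to choose which of the two matchings to realise. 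The second gap is that producing this cross-matched spanning pair of paths is precisely the technical heart of \cite{OV:14}; ``two nested applications of the lemma'' does not deliver it, since after the first Tutte path the second path must be threaded through the bridges while keeping the union spanning, and no off-the-shelf single-path lemma does that --- Ozeki and Vr\'ana prove a dedicated Tutte-subgraph lemma for subgraphs consisting of two disjoint paths, with its own induction. Smaller issues: in the suppression step, \(G'=(G-v)\cup\{x_1x_4\}\) need not be plane when \(x_1x_4\notin E(G)\) and \(x_1,x_4\) are not cofacial in \(G-v\) (better to ask for a hamiltonian \(x_1\)--\(x_4\) path of \(G-v\)), and \(G-v\) is only \(3\)-connected, so even that case already needs the full bridge-control argument rather than a direct appeal to Thomassen's theorem.
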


In Section~\ref{sec:onesep}, we will show that 3-connected triangulations with only one separating triangle are hamiltonian-connected. We will do this by showing a stronger result about 3-connected triangulations having an edge that is contained in all separating triangles. In Section~\ref{sec:non_hc} and Section~\ref{sec:computational} we investigate how much stronger this theorem could be made using an investigation similar to the one in \cite{BSVC:2015}. We first show that starting with 4 separating triangles there always exist 3-connected triangulations that are not hamiltonian-connected, and we further specify this in terms of the decomposition tree of a 3-connected triangulation. Finally, in Section~\ref{sec:computational} we give the results of some computer searches to check the hamiltonian-connectedness of 3-connected triangulation with a small number of separating triangles.

\section{One separating triangle}\label{sec:onesep}

The theorem which we will prove here is mainly a corollary of Theorem~\ref{thm:OZ:14}. We will show that for triangulations we can relax the restriction on 4-connectedness a bit and still have that the graph is hamiltonian-connected. More specific we will prove the following theorem.

\begin{theorem}\label{thm:sep_tri_share_edge}
Let \(G\) be a 3-connected triangulation. Let \(uv\) be an edge which is contained in all separating triangles of \(G\). Then \(G\) is hamiltonian-connected.
\end{theorem}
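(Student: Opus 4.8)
My plan is to derive the statement from Theorem~\ref{thm:OZ:14} by building from $G$ a suitable $4$-connected plane triangulation $G^{+}$. If $G=K_4$ the claim is immediate, so assume $G\neq K_4$, hence $|V(G)|\geq 5$. Let $f$ and $g$ be the two vertices such that $uvf$ and $uvg$ are the faces of $G$ incident with $uv$ (these are distinct since $G$ is $3$-connected). The construction: delete the edge $uv$, which merges $uvf$ and $uvg$ into a single $4$-face with boundary $u,f,v,g$, and insert a new vertex $z$ in that face joined to all of $u,f,v,g$. The idea is that $z$ reconnects the two sides of every former separating triangle $uvc$ of $G$: such a triangle no longer exists once $uv$ is gone, and the path $u\,z\,v$ (equivalently $f\,z\,g$) bridges what used to be its two sides, so $G^{+}$ should have no separating triangle.

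Before that can be concluded I would prove the one genuinely structural step: \emph{if $G\neq K_4$ and every separating triangle of $G$ contains $uv$, then $fg\notin E(G)$}. Indeed, if $fg\in E(G)$ then the $3$-cycles $ufg$ and $vfg$, not containing $uv$, cannot be separating triangles, hence are faces of $G$; but then at $u$ the faces through $\{v,f\}$, through $\{v,g\}$ and through $\{f,g\}$ are all present, which forces the rotation at $u$ to be exactly $(v,f,g)$, so $u$ has the three neighbours $v,f,g$ and nothing more. Since $|V(G)|\geq 5$, the triangle $vfg=N_G(u)$ then separates $u$ from the rest of $G$ and avoids $uv$, a contradiction. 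With $fg\notin E(G)$ in hand, a short case analysis finishes the claim that $G^{+}$ is $4$-connected: $G^{+}$ is a simple plane triangulation on $|V(G)|+1\geq 6$ vertices, hence $3$-connected, and it has no separating triangle --- a $3$-cycle of $G^{+}$ avoiding $z$ uses only edges of $G-uv$, so by hypothesis it is a face of $G$ not incident with $uv$ and hence still a face of $G^{+}$, while a $3$-cycle through $z$ must be of the form $zab$ with $ab$ one of the edges $uf,fv,vg,gu$ (the only edges of $G^{+}$ among $u,f,v,g$, since $uv\notin E(G^{+})$ and $fg\notin E(G)$), i.e.\ one of the four faces at $z$.

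It then remains to transfer hamiltonian paths back. Given distinct $x,y\in V(G)$, since $z\notin\{x,y\}$ the set $X=\{uz,\,xy\}$ consists of at most two vertex-pairs whose induced graph on $V(G^{+})$ is a forest of paths, so Theorem~\ref{thm:OZ:14} (applied to $G^{+}$, which is $2$-edge-hamiltonian-connected) yields a hamiltonian cycle of $G^{+}\cup X$ through both $uz$ and $xy$. Deleting the edge $xy$ leaves a hamiltonian $x$--$y$ path $P^{+}$ of $G^{+}$ that uses $uz$; since $z$ is an interior vertex of $P^{+}$, its second neighbour on $P^{+}$ is some $b\in\{v,f,g\}$, and replacing the subpath $u\,z\,b$ by the edge $ub\in\{uv,uf,ug\}\subseteq E(G)$ produces a hamiltonian $x$--$y$ path of $G$ (all other edges of $P^{+}$ lie in $G-uv$).

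The main obstacle, I expect, is getting the auxiliary construction exactly right so that Theorem~\ref{thm:OZ:14} applies cleanly on both ends. On the one hand $G^{+}$ must be genuinely $4$-connected, and this is the place where the hypothesis that the separating triangles \emph{all share the edge $uv$} (rather than merely being few in number) is essential, via the lemma $fg\notin E(G)$. On the other hand the surgery must be arranged so that only a \emph{single} forced edge ($uz$) is needed to pin $z$ between two vertices of $G$, leaving exactly enough room in the ``$2$'' of Theorem~\ref{thm:OZ:14} to also force $xy$ and thereby convert a hamiltonian cycle into the required hamiltonian path; a more naive construction that needs two forced edges (or an added edge $X$ of size three) would overshoot that budget. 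The remaining points --- planarity of $G^{+}$, the enumeration of its $3$-cycles, and the path surgery --- are routine.
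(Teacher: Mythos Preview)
Your proposal is correct and follows essentially the same route as the paper: the auxiliary graph $G^{+}$ is exactly the paper's $G'$ (subdividing $uv$ by a new vertex $z$ joined to the two opposite face vertices), and both arguments then invoke Theorem~\ref{thm:OZ:14} and contract the two $z$-edges on the resulting cycle back to an edge of $G$. Your write-up is a bit more careful than the paper's --- you treat $K_4$ separately and explicitly prove $fg\notin E(G)$ where the paper just asserts ``no new separating triangles are created'' --- and you unify the paper's two cases into one by always forcing $uz$ instead of $zv$.
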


As an immediate and easy corollary of this theorem, we get that this property trivially also holds for triangulation with exactly one separating triangle.

\begin{corollary}\label{cor:1septri_hc}
Let \(G\) be a 3-connected triangulation with exactly one separating triangle. Then \(G\) is hamiltonian-connected.
\end{corollary}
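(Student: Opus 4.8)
This is immediate from Theorem~\ref{thm:sep_tri_share_edge}: if $G$ has exactly one separating triangle $T$, then any one of the three edges of $T$ lies in every separating triangle of $G$, so $G$ is hamiltonian-connected. The substance lies in Theorem~\ref{thm:sep_tri_share_edge} itself, so I sketch how I would prove that; the plan is to reduce, cut by cut along the separating triangles, to the $4$-connected case handled by Theorem~\ref{thm:OZ:14}.

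First I would record the structure forced by the hypothesis. A separating triangle $abc$ of a triangulation $H$ decomposes $H$ into two $3$-connected triangulations $H_{\mathrm{in}}$ and $H_{\mathrm{out}}$, each having $abc$ as a face, and a $3$-cycle of $H_{\mathrm{in}}$ different from $abc$ is separating in $H_{\mathrm{in}}$ if and only if it is separating in $H$ (and likewise for $H_{\mathrm{out}}$); thus the separating triangles of $H$ other than $abc$ split between the two parts. Since every separating triangle of $G$ uses the fixed edge $uv$, these triangles are nested ``along'' $uv$, and one checks that $G$ is a chain $G_0, G_1, \dots, G_k$ of triangulations in which $G_{i-1}$ and $G_i$ meet exactly in a triangle through $uv$ and each $G_i$ has no separating triangle of its own -- that is, each $G_i$ is $4$-connected (or, in a degenerate case, $K_4$). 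If $k = 0$ then $G$ itself is $4$-connected and Theorem~\ref{thm:OZ:14} already gives hamiltonian-connectedness; otherwise I would induct on $k$, peeling off the end link $G_k$ along the triangle $T = G_{k-1} \cap G_k$.

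The engine is a gluing argument. Let $H = H_1 \cup H_2$ with $H_1 \cap H_2$ a triangle $T = \{a,b,c\}$ that is a face of both $H_1$ and $H_2$. A Hamiltonian path of $H$ between two vertices $x,y$ breaks into subpaths lying alternately in $H_1$ and in $H_2$, switching sides only at vertices of $T$; because $|T| = 3$ there are only a few combinatorial patterns for how $x$, $y$ and the three vertices of $T$ are distributed among these subpaths. Working them out, the part inside a given $H_i$ always has to be one of: (i) a Hamiltonian path of $H_i$ through one prescribed edge; (ii) a spanning union of two vertex-disjoint paths with prescribed endpoints; or (iii) a Hamiltonian path of $H_i - t$ between the other two vertices of $T$, for some $t \in T$. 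When $H_i$ is $4$-connected (hence $2$-edge-hamiltonian-connected by Theorem~\ref{thm:OZ:14}), each of these can be produced: take $X$ to be, respectively, the prescribed edge together with the chord joining the two desired endpoints; two suitable vertex-disjoint edges; or the two edges of $T$ at $t$ (a path of length $2$); take the guaranteed Hamiltonian cycle of $H_i \cup X$ through $X$; and delete from it the edges of $X$ (and, in case (iii), the vertex $t$). One must be a little careful because deleting two vertex-disjoint edges from a Hamiltonian cycle does not let one dictate which endpoints land on which of the two resulting paths, but in each pattern this ambiguity only permutes vertices of $T$ in a way the matching $H_j$-part can be chosen to agree with, so it causes no trouble. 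Running through all placements of $x$ and $y$ (both in one part, one in each, one or both on $T$) then produces the desired Hamiltonian $x$--$y$ path of $H$.

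This already proves Corollary~\ref{cor:1septri_hc} directly ($G = H_1 \cup H_2$ with both $H_i$ $4$-connected) and serves as the base step of the induction. The main obstacle in extending it to Theorem~\ref{thm:sep_tri_share_edge} is the bookkeeping of the inductive hypothesis: once the peeled chain $G_0 \cup \dots \cup G_{k-1}$ is no longer $4$-connected, Theorem~\ref{thm:OZ:14} cannot be invoked on it, so the hypothesis must assert that this chain already carries, relative to its exposed face $G_{k-1} \cap G_k$, precisely the capabilities (i)--(iii) that the gluing step consumes (in particular the one about Hamiltonian paths avoiding a vertex of that face), and one must verify that gluing the next $4$-connected link onto it reproduces those capabilities relative to the new exposed face. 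I expect this propagation to be the delicate point; the rest is the pattern enumeration above together with Theorem~\ref{thm:OZ:14} as the base.
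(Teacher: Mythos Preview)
Your one-line deduction of the corollary from Theorem~\ref{thm:sep_tri_share_edge} is exactly the paper's argument; the paper states the corollary as immediate and offers no separate proof.

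Where you diverge from the paper is in the sketch of Theorem~\ref{thm:sep_tri_share_edge}. The paper does not decompose along the chain of separating triangles at all. Instead it uses a single subdivision: let $w_1,w_2$ be the third vertices of the two facial triangles on $uv$, subdivide $uv$ by a new vertex $z$, and add the edges $zw_1,zw_2$. The resulting triangulation $G'$ has $z$ strictly inside every former separating triangle (they all used $uv$) and acquires no new ones, so $G'$ is $4$-connected and hence $2$-edge-hamiltonian-connected by Theorem~\ref{thm:OZ:14}. For $\{x,y\}=\{u,v\}$ take a hamiltonian cycle of $G'$ through $uz$ and $zv$ and delete those two edges; for any other pair arrange $v\notin\{x,y\}$, take a hamiltonian cycle of $G'\cup\{xy\}$ through $xy$ and $zv$, replace the segment $*zv$ (with $*\in\{u,w_1,w_2\}$) by the single edge $*v$, and delete $xy$. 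Two short cases, no induction.

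Your chain-and-glue plan is a plausible alternative and would probably succeed, but you have correctly identified its cost: once one $4$-connected link is peeled off, the remaining chain is no longer $4$-connected, so Theorem~\ref{thm:OZ:14} no longer applies to it and you must strengthen the inductive hypothesis to your package (i)--(iii) and verify it propagates, including the endpoint-pairing ambiguity in~(ii) that you flag. The paper's subdivision trick makes all of that bookkeeping disappear by turning the whole of $G$ into a single $4$-connected graph in one move.
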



\begin{figure}
\begin{center}
\begin{tikzpicture}[thick]

\node [circle, fill, inner sep=1.5pt, label={south west:\(u\)}] (u) at (-2,0) {};
\node [circle, fill, inner sep=1.5pt, label={south east:\(v\)}] (v) at (2,0) {};
\node [circle, fill, inner sep=1.5pt, label={north:\(w_1\)}] (w1) at (0,1) {};
\node [circle, fill, inner sep=1.5pt, label={south:\(w_2\)}] (w2) at (0,-1) {};
\node [circle, fill, inner sep=1.5pt, label={south east:\(z\)}] (z) at (0,0) {};

\draw (u) -- (z) -- (v) -- (w1) -- (u) (v) -- (w2) -- (u) (w1) -- (z) -- (w2);
\draw (u) -- (0,3) -- (v) -- (0,5) -- (u);
\draw[dotted, ultra thick] (0,3.1) -- (0,4.9);

\end{tikzpicture}
\end{center}
\caption{Subdividing the edge shared by all separating triangles.}\label{fig:sep_tri_share_edge}
\end{figure}
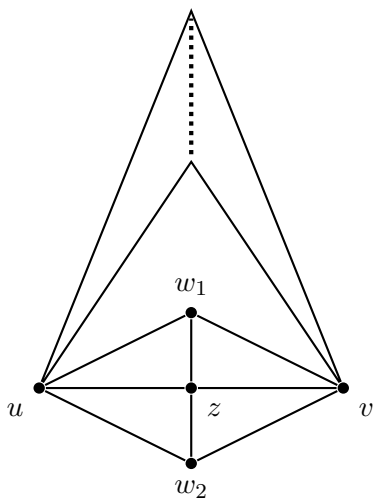

\begin{proof}[Proof of Theorem~\ref{thm:sep_tri_share_edge}]
Let \(G\) be a 3-connected triangulation and let \(uv\) be an edge which is contained in all separating triangles of \(G\). The edge \(uv\) is contained in two facial triangles of \(G\) which we denote by \(uvw_1\) and \(uvw_2\). We obtain the graph \(G'\) by subdividing the edge \(uv\) with the vertex \(z\) and connecting \(z\) with the vertices \(w_1\) and \(w_2\). The graph \(G'\) is shown in Figure~\ref{fig:sep_tri_share_edge}. The modification adds a vertex in each separating triangle and no new separating triangles are created, so \(G'\) is 4-connected. Owing to Theorem~\ref{thm:OZ:14} \(G'\) is 2-edge-hamiltonian-connected.

In order to show that \(G\) is hamiltonian-connected, we have to show for each pair of distinct vertices \(x,y \in V(G)\) that there is a hamiltonian path from \(x\) to \(y\). Assume first that \(\{x,y\}=\{u,v\}\). Since \(G'\) is 2-edge-hamiltonian-connected, it contains a hamiltonian cycle \(C\) through the edges \(uz\) and \(zv\). The path \(C-\{uz, zv\}\) cannot contain the edges \(w_1z\) or \(w_2z\), and therefore forms a hamiltonian path between \(u\) and \(v\) in \(G\).

Assume now that \(\{x,y\}\neq\{u,v\}\). W.l.o.g. we can assume that \(v\notin \{x,y\}\). Since \(G'\) is 2-edge-hamiltonian-connected, we have that \(G' \cup {xy}\) contains a hamiltonian cycle \(C\) through \(xy\) and \(zv\). The cycle \(C\) contains at some point the sequence \(*zv\) where \(* \in \{w_1, w_2, u\}\). We obtain the cycle \(C'\) from \(C\) by replacing the sequence \(*zv\) by the edge \(*v\). The path \(C' - \{xy\}\) is a hamiltonian path from \(x\) to \(y\) in \(G\).
\end{proof}

\section{Decomposition trees with maximum degree \(\Delta(T)>3\)}\label{sec:non_hc}

Jackson and Yu \cite{JY:02} defined a decomposition tree for a 3-connected triangulation as follows: A triangulation with a separating triangle \(S\) can be split into two triangulations: the subgraphs inside and outside of the separating triangle with a copy of \(S\) contained in both. By iteratively applying this procedure to a triangulation with \(k\) separating triangles, we obtain a collection of \(k+1\) triangulations without separating triangles. These 4-connected triangulations form the vertices of the decomposition tree. Two vertices are adjacent if the corresponding pieces share a separating triangle in the original triangulation.

In \cite{Jung:1978} the \emph{scattering number} \(s(G)\) of a graph \(G\) is defined as \[s(G)=\max\{k(G-X)-|X|:X\subseteq V(G), k(G-X)\neq 1\},\]
where \(k(H)\) denotes the number of components of the graph \(H\). In \cite{He:1988} it is noted that \(s(G) \leq -1\) is a necessary condition for \(G\) to be hamiltonian-connected.

Let \(W_n\) be the plane graph obtained by adding a vertex in the center of a cycle of length \(n\) and connecting it to all vertices of the cycle.

\begin{theorem}
For each tree \(T\) with maximum degree \(\Delta(T)>3\), there exists a 3-connected triangulation \(G\) with \(T\) as decomposition tree such that \(G\) is not hamiltonian-connected.
\end{theorem}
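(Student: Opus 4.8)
The plan is to use the criterion quoted above from \cite{He:1988}: since $s(G)\le -1$ is necessary for hamiltonian-connectedness, it suffices to build, for each tree $T$ with $\Delta(T)>3$, a $3$-connected triangulation $G$ with decomposition tree $T$ together with a vertex set $X$ satisfying $k(G-X)\ge|X|\ge 2$; then $s(G)\ge 0$ and $G$ is not hamiltonian-connected. Fix a vertex $v$ of $T$ with $\deg_T(v)=\Delta(T)=d\ge 4$, and let $T_1,\dots,T_d$ be the components of $T-v$, each rooted at its neighbour of $v$.

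For the piece of $G$ corresponding to $v$ I would take the $d$-fold bipyramid $D_d$: a rim cycle $r_1r_2\cdots r_d$ together with two apices $h$ and $h'$, each adjacent to all rim vertices. This is a triangulation, and for $d\ge 4$ it is $4$-connected and contains no separating triangle, so it is a legitimate vertex of a decomposition tree; its faces are the $d$ triangles $hr_ir_{i+1}$ around $h$ and the $d$ triangles $h'r_ir_{i+1}$ around $h'$ (indices mod $d$). I would then form $G$ by gluing into each of the $d$ faces $hr_ir_{i+1}$ a triangulation $H_i$ having that triangle as outer face and realizing the rooted subtree $T_i$. The existence of such ``rooted'' triangulations is the routine inductive ingredient, carried out exactly as in \cite{JY:02}: take the piece of the root of $T_i$ with one of its faces identified with $hr_ir_{i+1}$, recurse into its remaining designated faces, and at the leaves use $K_4$ (i.e. a single added vertex). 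Since $D_d$ has no separating triangle, and gluing $H_i$ into $hr_ir_{i+1}$ turns that triangle into a separating triangle while adding only the separating triangles internal to $H_i$, the decomposition tree of the resulting $G$ is exactly $T$.

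It remains to exhibit the bad set. I would take $X=\{h,r_1,\dots,r_d\}$, so $|X|=d+1$. In $G-X$ the apex $h'$ becomes isolated, since its neighbours $r_1,\dots,r_d$ have all been deleted; and for each $i$ the interior of $H_i$ is joined to the rest of $G$ only through $h,r_i,r_{i+1}$, all of which lie in $X$, so — each $H_i$ having at least one interior vertex because $T_i$ is non-empty — this interior contributes at least one further component of $G-X$. Hence $k(G-X)\ge 1+d=|X|$, which gives $s(G)\ge 0$, so $G$ is not hamiltonian-connected.

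The main obstacle is really a sanity check on why something this simple is needed and why it is forced: in a $4$-connected plane triangulation a $4$-element separator leaves at most two components (each component must be adjacent to all four separator vertices, and three such components would force a $K_{3,4}$-minor), so one cannot make the removal of the vertices of a single small $4$-connected piece break $G$ into more parts than the separator has vertices — the ``wheel of blobs'' attempt fails for this reason. The bipyramid works because its inner apex $h$ is surrounded by $d$ faces \emph{all of whose vertices lie in} $X$, so each glued subtriangulation is severed ``for free''. Beyond the component count, the points to verify carefully are the easy structural facts that $D_d$ is $4$-connected and separating-triangle-free for every $d\ge 4$ (min degree $4$; any $3$-cut avoiding an apex is harmless, any $3$-cut through an apex leaves a path or cycle on the rim), and that the inductive construction of the $H_i$ produces precisely the prescribed rooted subtrees and introduces no spurious separating triangles.
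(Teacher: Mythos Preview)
Your proposal is correct and essentially identical to the paper's proof: the paper also takes the bipyramid as the piece for $v$ (described there as the wheel $W_{d(v)}$ with a single vertex added in its outer face), glues realizations of the $T_i$ into the $d$ triangular faces around the hub, and then removes exactly your set $X=\{h,r_1,\dots,r_d\}=V(W_{d(v)})$ to obtain $d+1$ components versus $d+1$ deleted vertices, giving $s(G)\ge 0$. The only cosmetic difference is that the paper builds the $H_i$ by iteratively nesting double wheels rather than invoking a general rooted construction, and it omits your closing discussion of why an arbitrary $4$-connected piece would not work.
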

\begin{proof}
Let \(v\) be a vertex of \(t\) with degree \(d(v)>3\). Removing \(v\) from \(T\) results in  \(d(v)\) components which we denote by \(T_1,\dots,T_{d(v)}\). By subdiving double wheels with double wheels, we can construct a triangulation \(G_i\) for each of the trees \(T_i\) such that \(G_i\) has \(T_i\) as a decomposition tree. At this point we have no requirements for each triangulation\(G_i\) except that it has \(T_i\) as decomposition tree and that the part corresponding to the vertex of \(T_i\) that neighboured \(v\) in \(T\) has a facial triangle that is not subdivided. Let \(G\) be the graph obtained by subdividing each of the triangular faces of \(W_{d(v)}\) with one of the \(G_i\)'s, and subdividing the outer face with a single vertex. We have that \(k(G - V(W_{d(v)}))=d(v)+1\) and \(|V(W_{d(v)})|=d(v)+1\), so \(s(G)\geq 0\). This implies that \(G\) is not hamiltonian-connected.
\end{proof}

This technique cannot be used to exclude any subcubic tree as the decomposition tree of a hamiltonian-connected triangulation. In \cite{BSVC:2015} it is shown that in a triangulation \(G\) with a subcubic decomposition tree, we have for each splitting set \(S\): \(k(G-S) < |S|\). So the scattering number of \(G\) is at most -1. 

\section{Computational results}\label{sec:computational}

In order to check whether a graph with \(n\) vertices is hamiltonian-connected, we need to check whether there exists a hamiltonian path between \(\frac{n(n-1)}{2}\) pairs of vertices. If two adjacent vertices are connected by a hamiltonian path, then we actually have a hamiltonian cycle, and we can conclude that all vertices that are adjacent on the cycle are connected by a hamiltonian path. This can be used to eliminates several pairs while looking for hamiltonian paths. Below we will see some more results which can further speed up the programs that verify whether a triangulation is hamiltonian-connected.

We will need a theorem proven by Jackson and Yu in \cite{JY:02}.

\begin{theorem}[\cite{JY:02}, (4.2)]\label{thm:JY:02_2edges}
Let \(G\) be a 3-connected triangulation with a decomposition tree of maximum degree at most three. Let \(H\) be a piece of \(G\) corresponding to a vertex of degree at most 2, \(t\) be a facial cycle of both \(H\) and \(G\), and \(V(t) = \{u,v,w\}\). Then \(G\) has a hamiltonian cycle through \(uv\) and \(vw\).
\end{theorem}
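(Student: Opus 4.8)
\medskip
\noindent\textbf{Proof plan.}
The plan is to argue by induction on the number \(k\) of separating triangles of \(G\), equivalently on the number \(k+1\) of vertices of the decomposition tree \(T\), feeding the 4-connected pieces into Theorem~\ref{thm:OZ:14}. In the base case \(k=0\) the graph \(G=H\) is a 4-connected triangulation, and since the two prescribed edges \(uv\) and \(vw\) of the facial triangle \(t\) share the vertex \(v\), the set \(X=\{uv,vw\}\) induces a path on \(V(G)\) and \(G\cup X=G\); Theorem~\ref{thm:OZ:14} then supplies a hamiltonian cycle of \(G\) through \(uv\) and \(vw\). So all the work is in the inductive step.

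For the inductive step, \(T\) has at least two vertices, hence a leaf \(L\neq H\) (if \(H\) is itself a leaf, take the other one). Let \(S_L\) be the separating triangle joining \(L\) to the rest of \(G\), and split \(G\) along \(S_L\) into the 4-connected leaf-piece \(G_L\) corresponding to \(L\) and the triangulation \(G'\) corresponding to \(T-L\); both have \(S_L\) as a facial triangle. The piece \(H\), the face \(t\) and the edges \(uv,vw\) all survive unchanged in \(G'\), the degree of \(H\) in \(T-L\) is still at most two, and \(T-L\) still has maximum degree at most three, so the induction hypothesis yields a hamiltonian cycle \(C'\) of \(G'\) through \(uv\) and \(vw\). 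The remaining task is to reroute \(C'\) through the interior of \(S_L\) so as to absorb the vertices of \(G_L\) not on \(S_L\): \(C'\) meets all three vertices of \(S_L\), and — according to how many of the three edges of \(S_L\) it uses — one would delete a short subpath of \(C'\) near \(S_L\) and reinsert a hamiltonian path of \(G_L\) with matching endpoints (or through a matching edge), which exists because \(G_L\) is 4-connected and hence, again by Theorem~\ref{thm:OZ:14}, 2-edge-hamiltonian-connected and in particular hamiltonian-connected.

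The step I expect to be the main obstacle is making this reroute always legal. The subpath of \(C'\) one wants to delete can contain \(uv\) or \(vw\) (exactly when one of these is itself an edge of \(S_L\)), and when \(C'\) uses only one edge of \(S_L\), or none, there may be no admissible way to splice \(G_L\) in at all. Resolving this forces the induction hypothesis to be strengthened so that it also controls, for every separating triangle simultaneously, the pattern in which the cycle is permitted to meet it — precisely the role played in the original argument by the Tutte-path (``\(W\)-path'') machinery for plane triangulations with a bounded number of separating triangles. The two degree hypotheses are what keep the resulting list of constraints realizable: \(T\) having maximum degree at most three is what makes the triangulation amenable to such a cycle argument in the first place (subcubic decomposition trees being the ones that are \emph{not} ruled out by the scattering-number obstruction of Section~\ref{sec:non_hc}), while \(H\) having degree at most two leaves enough freedom to additionally prescribe the two edges \(uv,vw\) on the non-separating face \(t\); with a degree-three piece the argument would no longer have room to accommodate them.
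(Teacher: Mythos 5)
The paper does not prove this statement at all: it is imported verbatim from Jackson and Yu \cite{JY:02} (their result (4.2)), so there is no in-paper proof to compare against. Judged on its own terms, your proposal has a genuine gap, and it is exactly the one you flag yourself: the splice in the inductive step. After obtaining a hamiltonian cycle \(C'\) of \(G'\) through \(uv\) and \(vw\), you need \(C'\) to traverse the separating triangle \(S_L\) along at least one of its edges before you can delete a subpath and insert a hamiltonian path of the leaf piece \(G_L\); nothing in the induction hypothesis guarantees this. The cycle \(C'\) may visit the three vertices of \(S_L\) without using any of its three edges, and then no local rerouting can absorb \(V(G_L)\setminus V(S_L)\). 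Even when \(C'\) does use an edge of \(S_L\), that edge may be \(uv\) or \(vw\) itself (when \(t\) and \(S_L\) share an edge), and the replacement path through \(G_L\) must then be chosen so that the prescribed edges survive --- again something the unstrengthened hypothesis does not control. So the argument as written does not close, and your own diagnosis is correct: one must prove a stronger statement that prescribes, for every separating triangle simultaneously, how the cycle is allowed to meet it.

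That strengthening is not a routine patch; it is the substance of Jackson and Yu's paper. Their (4.2) is derived from a theory of Tutte paths (their ``\(W\)-paths'') in circuit graphs and plane chains of blocks built up over several sections, not from a leaf-pruning induction on the decomposition tree, and the role of the degree bounds (\(\Delta(T)\le 3\), \(\deg_T(H)\le 2\)) only becomes visible inside that machinery. A further, non-mathematical caveat: Theorem~\ref{thm:OZ:14} postdates \cite{JY:02} by more than a decade, so while it does dispose of your base case \(k=0\), it cannot be leaned on as the engine of their proof. In short, the proposal correctly identifies the shape of the difficulty but does not overcome it; as a proof of the theorem it is incomplete.
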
 

The next corollary immediately follows from this theorem.

\begin{corollary}\label{cor:path_every_edge}
Let \(G\) be a 3-connected triangulation with a path as decomposition tree. Then \(G\) has a hamiltonian cycle through any edge.
\end{corollary}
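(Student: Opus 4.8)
The plan is to reduce the statement directly to Theorem~\ref{thm:JY:02_2edges}. The crucial observation is that when the decomposition tree of \(G\) is a path, \emph{every} vertex of the tree has degree at most \(2\), so the hypothesis in Theorem~\ref{thm:JY:02_2edges} that \(H\) correspond to a vertex of degree at most \(2\) is automatically met by every piece of \(G\). Consequently it suffices, given an arbitrary edge \(e=xy\) of \(G\), to produce a single facial triangle \(t\) of \(G\) that contains \(e\) and that is simultaneously a facial cycle of some piece \(H\) of the decomposition. Then Theorem~\ref{thm:JY:02_2edges} yields a hamiltonian cycle of \(G\) through two of the three edges of \(t\), and writing \(V(t)=\{x,y,w\}\) and applying the theorem with \(u=x\), \(v=y\) gives a hamiltonian cycle through \(xy=e\) (and through \(yw\)); since \(e\) is arbitrary, we are done.

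To obtain such a \(t\), I would recall how the decomposition is built: splitting \(G\) along a separating triangle \(S\) produces the inside-of-\(S\) piece and the outside-of-\(S\) piece, each carrying a copy of \(S\) as a facial triangle. Hence every facial triangle of \(G\) lies strictly on one side of each separating triangle used in the decomposition, and therefore belongs to exactly one piece \(H\), where it is again a facial cycle. The only faces created during the splitting process are the separating triangles of \(G\) themselves, and these are never faces of \(G\). Now \(e\) lies on exactly two facial triangles of \(G\); choosing either one as \(t\) and letting \(H\) be the (unique) piece containing it, we have that \(t\) is a facial cycle of both \(H\) and \(G\), as required.

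I do not expect a genuine obstacle: all the substance is contained in Theorem~\ref{thm:JY:02_2edges}, and the remaining work is the routine bookkeeping that a facial triangle of \(G\) persists as a facial triangle of its piece, which is immediate from the definition of the decomposition. The one point that must be stated with care is that \(t\) should be chosen as a face of \(G\), not merely as a face of the piece \(H\): the separating triangles of \(G\) do occur as faces of pieces but are not faces of \(G\), so they are not admissible choices for ``\(t\)'' in the theorem. Picking \(t\) among the two \(G\)-faces through \(e\) sidesteps this entirely.
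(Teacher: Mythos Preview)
Your proof is correct and follows essentially the same approach as the paper: pick a facial triangle through the given edge and apply Theorem~\ref{thm:JY:02_2edges}, using that every vertex of a path has degree at most~\(2\). You are simply more explicit than the paper about why the chosen face of \(G\) is also a face of some piece \(H\), which the paper leaves implicit.
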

\begin{proof}
Let \(uv\) be an edge of \(G\). Pick one of the two faces containing the edge \(uv\), and label the third vertex \(w\). Since the maximum degree of the decomposition tree is 2, all conditions for Theorem~\ref{thm:JY:02_2edges} are met, so \(G\) has a hamiltonian cycle through \(uv\) and \(vw\), so certainly through \(uv\).
\end{proof}

If a triangulation has two separating triangles, then the decomposition tree is always a path of length 2. If it has three separating triangles, then the decomposition tree is either a path of length 3 or \(K_{1,3}\).

The following lemma was useful for faster checking whether a graph is hamiltonian-connected, since it allows us to decide on the existence of several other hamiltonian paths based on a single hamiltonian path.

\begin{lemma}
Let \(G\) be a graph with \(n\) vertices. Let \(P\) be a hamiltonian path in \(G\). Let \(x_1,\dots,x_n\) be the sequence of vertices on this path (so in \(P\) we have that \(x_i\) is adjacent to \(x_{i+1}\) for \(1\leq i < n\)). If \(x_i\) (\(1<i\leq n\)) is adjacent to \(x_1\) in \(G\), then there is a hamiltonian path from \(x_{i-1}\) to \(x_n\).
\end{lemma}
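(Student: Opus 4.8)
The plan is to exploit the standard "rotation" trick for Hamiltonian paths. Given the Hamiltonian path $P = x_1 x_2 \cdots x_n$ and the hypothesis that $x_1 x_i \in E(G)$ for some $i$ with $1 < i \leq n$, I would form a new Hamiltonian path by using the edge $x_1 x_i$ to reverse a segment of $P$. Concretely, consider the walk that starts at $x_n$, traverses $P$ backwards down to $x_i$, then takes the edge $x_i x_1$, and then traverses the initial segment $x_1 x_2 \cdots x_{i-1}$ in forward order. In symbols, the new path is $x_n x_{n-1} \cdots x_i x_1 x_2 \cdots x_{i-1}$.

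The key verification steps are the following. First, every consecutive pair in this new sequence is an edge of $G$: the pairs $x_j x_{j-1}$ for $j > i$ are edges because they are edges of $P$ (just traversed in reverse); the pair $x_i x_1$ is an edge by hypothesis; and the pairs $x_j x_{j+1}$ for $1 \leq j \leq i-2$ are edges of $P$. Second, every vertex of $G$ appears exactly once: the vertices $\{x_i, x_{i+1}, \dots, x_n\}$ are listed once in the reversed tail, the vertices $\{x_1, \dots, x_{i-1}\}$ are listed once in the forward head, and these two sets partition $V(G)$ since $P$ was Hamiltonian. Hence the new sequence is a Hamiltonian path, and its endpoints are $x_n$ and $x_{i-1}$, as claimed. (The case $i = n$ is degenerate but harmless: the "reversed tail" is the single vertex $x_n$, and the path becomes $x_n x_1 x_2 \cdots x_{n-1}$, with endpoints $x_n$ and $x_{n-1} = x_{i-1}$.)

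I do not expect any genuine obstacle here; the statement is an elementary structural observation and the proof is essentially a one-line construction together with a bookkeeping check. The only point requiring a small amount of care is making sure the index ranges are handled correctly at the boundaries (that $x_{i-1}$ is well-defined, which it is since $i > 1$, and that the edge sets listed above cover all consecutive pairs with no overlap or omission), so I would write the new path out explicitly and then state the three facts — consecutive pairs are edges, each vertex occurs once, the endpoints are $x_{i-1}$ and $x_n$ — as the body of the proof.
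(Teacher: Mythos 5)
Your construction is exactly the paper's: the path \(x_n\dots x_i x_1\dots x_{i-1}\) is the paper's path \(x_{i-1}\dots x_1 x_i\dots x_n\) written in reverse, i.e.\ the standard rotation using the edge \(x_1x_i\). Correct, same approach, just with more explicit bookkeeping.
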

\begin{proof}
The path from \(x_{i-1}\) to \(x_n\) is given by \(x_{i-1}\dots x_1x_i\dots x_n\).
\end{proof}

The lemma above is obviously more powerful if the degree of the end points of the paths are large. Therefore we sorted the vertices, so that we first checked for paths between the vertices with the largest degrees. After applying this lemma, we get several hamiltonian paths between new pairs of vertices. For each of these paths, we can again apply this lemma. This also vastly increased the speed of the program. Applying the lemma a third time to the new paths did not really deliver a significant increase in speed.

The following two lemmata do not give any information on the existence of a hamiltonian path between two specific vertices, but reduces the non-existence of such a path to the non-existence of another path in a different triangulation.

\begin{lemma}
Let \(G\) be a triangulation on \(n\) vertices containing \(s\) separating triangles. Let \((u,v,w_1)\) and \((u,v,w_2)\) be two facial triangles of \(G\) such that \(w_1 \nsim w_2\) and \(N(w_1)\cap N(w_2) = \{u,v\}\).
If each pair of adjacent vertices in all triangulations on \(n\) vertices containing at most \(s\) separating triangles are connected by a hamiltonian path, then \(G\) has a hamiltonian path from \(w_1\) to \(w_2\).
\end{lemma}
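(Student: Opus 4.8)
The statement asks us to take a triangulation $G$ on $n$ vertices with $s$ separating triangles, plus two faces $(u,v,w_1)$ and $(u,v,w_2)$ sharing the edge $uv$, with $w_1 \nsim w_2$ and $N(w_1) \cap N(w_2) = \{u,v\}$, and to conclude that $G$ has a hamiltonian path from $w_1$ to $w_2$ — assuming the inductive-style hypothesis that every pair of adjacent vertices in every $n$-vertex triangulation with at most $s$ separating triangles is joined by a hamiltonian path. So I want to build an auxiliary triangulation $G'$ on $n$ vertices with at most $s$ separating triangles, containing an edge $ab$, such that a hamiltonian $a$–$b$ path in $G'$ converts back into a hamiltonian $w_1$–$w_2$ path in $G$.

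**The construction I'd use.**

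The natural move is a "flip"-type operation on the quadrilateral $u w_1 v w_2$: delete the edge $uv$ and add the edge $w_1 w_2$. Call the result $G'$. Since $(u,v,w_1)$ and $(u,v,w_2)$ are faces of $G$, this is a legal diagonal flip and $G'$ is again a triangulation on the same vertex set, with the two faces $(u,v,w_i)$ replaced by $(w_1,w_2,u)$ and $(w_1,w_2,w_2)$... more precisely by $(w_1, w_2, u)$ and $(w_1, w_2, v)$. The key point to check is that $G'$ has at most $s$ separating triangles. Any separating triangle of $G'$ that is not a separating triangle of $G$ must use the new edge $w_1 w_2$; such a triangle is $w_1 w_2 x$ with $x \in N(w_1) \cap N(w_2)$, and by hypothesis $N(w_1) \cap N(w_2) = \{u,v\}$, so $x \in \{u,v\}$. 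But $w_1 w_2 u$ and $w_1 w_2 v$ are both faces of $G'$, hence not separating. Conversely every separating triangle of $G$ other than $uvw_1, uvw_2$ — which are faces of $G$, not separating — survives in $G'$ provided it does not contain the edge $uv$; a separating triangle $uvy$ of $G$ would force $y$ to be a common neighbour of $u$ and $v$ distinct from $w_1, w_2$, which is possible, and in $G'$ this $uvy$ is no longer a triangle since $uv$ is gone — but it is replaced by considerations of whether new separating structure appears. The cleanest claim to establish is just the inequality: the number of separating triangles of $G'$ is at most that of $G$, which follows because the only candidate new separating triangles use $w_1w_2$ and are both facial. I should double-check $G'$ is still $3$-connected, which is automatic since a diagonal flip preserves $3$-connectedness for triangulations on at least $5$ vertices (and the degenerate small cases can be handled directly).

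**Finishing.**

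Now apply the hypothesis to $G'$ with the adjacent pair $w_1, w_2$: there is a hamiltonian path $P'$ in $G'$ from $w_1$ to $w_2$. If $P'$ does not use the edge $w_1 w_2$, then every edge of $P'$ lies in $G$ as well (the only edge of $G'$ not in $G$ is $w_1 w_2$), so $P'$ is already a hamiltonian $w_1$–$w_2$ path in $G$ and we are done. If $P'$ does use $w_1 w_2$, then since $w_1$ and $w_2$ are its endpoints, that edge must be the first or last edge of $P'$; but a path visiting every one of the $n \geq 4$ vertices cannot have its two endpoints joined directly by its first edge unless $n = 2$. Concretely, $P'$ starts $w_1 w_2 \cdots$, and since it is a path it never returns to $w_1$, so $w_1 w_2$ being an edge of $P'$ forces $P'$ to be exactly the edge $w_1 w_2$, contradicting $n \geq 4$ (note $n \geq 5$ here since $G$ has a separating or at least nontrivial structure; in any case $n \geq 4$ as $u, v, w_1, w_2$ are distinct). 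Hence $P'$ never uses $w_1 w_2$, and the first case applies.

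**Main obstacle.**

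The only real content is verifying that $G'$ has at most $s$ separating triangles; everything else is bookkeeping. I'd isolate that as the crux: show that the only separating-triangle candidates in $G'$ absent from $G$ pass through $w_1 w_2$, are therefore of the form $w_1 w_2 x$ with $x \in \{u,v\}$ by the hypothesis on common neighbours, and are facial in $G'$, hence not separating. A minor subtlety worth a sentence is confirming that no separating triangle is lost-then-regained in a way that changes the count — but since we only need the $\leq s$ bound, losing separating triangles is harmless, and we just need that we do not gain any.
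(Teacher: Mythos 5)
Your proposal is correct and follows essentially the same route as the paper: flip the diagonal of the quadrilateral \(uw_1vw_2\) (delete \(uv\), add \(w_1w_2\)), observe that the hypotheses \(w_1\nsim w_2\) and \(N(w_1)\cap N(w_2)=\{u,v\}\) guarantee the result is a simple triangulation with no new separating triangles, and pull back the hamiltonian \(w_1\)--\(w_2\) path. You actually spell out two details the paper leaves implicit (why no new separating triangle arises, and why the path cannot use the edge \(w_1w_2\)), so no changes are needed.
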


\begin{proof}
Consider the graph \(G'\) obtained from \(G\) by removing the edge \(uv\) and adding the edge \(w_1w_2\). The graph \(G'\) is a triangulation on \(n\) vertices having at most \(s\) separating cycles, so it contains a hamiltonian path \(P\) from \(w_1\) to \(w_2\). Since all edges of \(G'\) except for \(w_1w_2\) are also contained in \(G\), we have that \(P\) is also a hamiltonian path from \(w_1\) to \(w_2\) in \(G\).
\end{proof}

An edge in a 3-connected simple triangulation is called \emph{reducible} if it is not contained in a separating triangle or a chordless separating quadrangle. The following lemma allows us to skip some pairs of adjacent vertices.

\begin{lemma}
Let \(G\) be a triangulation on \(n\) vertices with a decomposition tree \(D\).
Let \(u\) be a vertex of degree 4, and let \(uv\) be a reducible edge of \(G\).
If each pair of adjacent vertices in all triangulations on \(n-1\) vertices with decomposition tree \(D\) are connected by a hamiltonian path, then \(G\) has a hamiltonian path from \(u\) to \(v\).
\end{lemma}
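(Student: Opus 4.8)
The plan is to remove the degree-$4$ vertex $u$ by contracting the edge $uv$, and to pull back a hamiltonian path from the resulting triangulation on $n-1$ vertices. First I would fix notation. Since $G$ is a $3$-connected triangulation and $u$ has degree $4$, the neighbours of $u$, taken in rotational order around $u$, form a $4$-cycle; label them $v_1,v_2,v_3,v_4$ with $v_1=v$, so that $v_1v_2,v_2v_3,v_3v_4,v_4v_1\in E(G)$. Since $uv$ is reducible it lies in no separating triangle, which forces $N(u)\cap N(v)=\{v_2,v_4\}$, and in particular $v_1v_3\notin E(G)$. Let $G'$ be the graph obtained from $G$ by deleting $u$ and adding the edge $v_1v_3$: deleting $u$ merges the four faces at $u$ into one quadrilateral face with boundary $v_1v_2v_3v_4$, and the new edge $v_1v_3$ is a diagonal of that face, so $G'$ is a simple plane triangulation on $n-1$ vertices. (Equivalently, $G'$ is obtained from $G$ by contracting $uv$.) As $G$ has at least $5$ vertices, $G'$ is $3$-connected.

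The main work is to show that $G'$ has the same decomposition tree $D$ as $G$; after that the hypothesis does the rest. To see that contracting $uv$ creates no new separating triangle: a triangle of $G'$ that is not a triangle of $G$ must contain the edge $v_1v_3$, hence is $v_1v_3x$ for some common neighbour $x$ of $v_1$ and $v_3$ in $G-u$. If such an $x$ lay outside $\{v_2,v_4\}$, then $v_1xv_3u$ would be a chordless separating quadrangle through $uv$ (it is chordless because $v_1v_3\notin E(G)$ and $x\notin N(u)$, and it separates $v_2$ from $v_4$), contradicting reducibility; so $N_{G'}(v_1)\cap N_{G'}(v_3)=\{v_2,v_4\}$ and the only triangles of $G'$ through $v_1v_3$ are its two incident faces $v_1v_2v_3$ and $v_1v_3v_4$. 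To see that it destroys no separating triangle: a separating triangle of $G$ avoiding $u$ still separates $G'$ (the side that contained $u$ now contains the diagonal $v_1v_3$ and at least one of the $v_i$, and the other side is untouched), and the only non-facial triangle of $G$ through $u$ is $\{u,v_2,v_4\}$ -- which exists precisely when $v_2v_4\in E(G)$ -- and it is replaced in $G'$ by the triangle $\{v_1,v_2,v_4\}$, corresponding to the same edge of $D$, provided $v$ has degree at least $4$.

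With $D(G')=D$ settled, I would apply the assumption to $G'$ and its edge $v_1v_3$ to get a hamiltonian path $P$ of $G'$ from $v_1$ to $v_3$. Since $P$ has at least three vertices and $v_1,v_3$ are its end vertices, $P$ does not use the edge $v_1v_3$, so $P$ consists only of edges of $G-u$, hence of edges of $G$. Because $u$ is adjacent in $G$ to $v_3$, extending $P$ by the edge $v_3u$ yields a hamiltonian path of $G$ from $u$ to $v_1=v$, as required.

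The step I expect to be the real obstacle is the decomposition-tree bookkeeping in the second paragraph: checking that the contraction changes neither the set of separating triangles nor their nesting. The ``chordless separating quadrangle'' part of the definition of a reducible edge is exactly what is needed to forbid a new separating triangle. The one loophole is the low-degree case $\deg v=3$: then $N(v)=\{u,v_2,v_4\}$ is a separating triangle enclosing only $v$, so the contraction cuts off a $K_4$ and eliminates that separating triangle; this degenerate configuration has to be handled by a short separate argument, or by first knowing the statement for the decomposition tree obtained from $D$ by deleting the corresponding $K_4$ leaf.
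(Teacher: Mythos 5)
Your construction is the same as the paper's: contract \(uv\) (delete \(u\), add the diagonal \(v_1v_3\)), invoke the hypothesis on the resulting \((n-1)\)-vertex triangulation to obtain a hamiltonian path from \(v_1\) to \(v_3\), and append the edge \(v_3u\). The paper merely \emph{asserts} that the contraction preserves the decomposition tree; you are right that this assertion is the crux, but your verification of it does not go through, and the problem is broader than the \(\deg v=3\) loophole you flag at the end. The failure occurs whenever \(v_2v_4\in E(G)\), i.e.\ whenever the two face-apexes of \(uv\) are adjacent --- a configuration that reducibility of \(uv\) does not exclude, since every \(4\)-cycle through \(uv\) that it produces carries the chord \(uv_2\) or \(uv_4\). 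In that case \(uv_2v_4\) is a non-facial, hence separating, triangle of \(G\) (it has \(v_1\) on one side and \(v_3\) on the other), and it simply vanishes when \(u\) is deleted. You claim it is ``replaced'' by \(v_1v_2v_4\); but when \(\deg v\ge 4\) the triangle \(v_1v_2v_4\) is \emph{already} a separating triangle of \(G\) (non-facial, with \(u\) and \(v_3\) on one side and the remaining neighbours of \(v_1\) on the other), so \(G\) contains the nested pair \(uv_2v_4\supset v_1v_2v_4\) with a \(K_4\) piece between them, while \(G'\) contains only \(v_1v_2v_4\): the decomposition tree loses a vertex.

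Concretely: take an octahedron with outer face \(abc\) and inner face \(v_2v_3v_4\); inside \(v_2v_3v_4\) place \(u\) adjacent to \(v_1,v_2,v_3,v_4\); inside \(uv_2v_4\) place \(v_1\) adjacent to \(u,v_2,v_4,w\); inside \(v_1v_2v_4\) place \(w\) adjacent to \(v_1,v_2,v_4\). Then \(uv_1\) is reducible, \(\deg u=\deg v_1=4\), and \(G\) has the three nested separating triangles \(v_2v_3v_4\), \(uv_2v_4\), \(v_1v_2v_4\) (decomposition tree a path on four vertices), whereas the contracted graph has only two (a path on three vertices). Hence the hypothesis, which quantifies only over triangulations with decomposition tree \emph{exactly} \(D\), cannot be applied to \(G'\). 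This is a genuine gap --- though, to be fair, it is the same gap as in the paper's own one-line justification (the parenthetical ``even if \(uv\) is contained in a separating quadrangle with a chord'' is precisely the configuration \(v_2v_4\in E(G)\), and the preservation claim is false there). A repair must either additionally assume \(v_2v_4\notin E(G)\), or strengthen the hypothesis to cover the smaller decomposition trees that can arise from \(D\) under the contraction --- which is harmless for the intended computational use, since triangulations with smaller decomposition trees are verified first.
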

\begin{proof}
Since the edge \(uv\) is not contained in a separating triangle or a chordless separating quadrangle, the graph obtained by contracting \(uv\) in \(G\) will still have the same decomposition tree, even if \(uv\) is contained in separating quadrangle with a chord. Let \(v, w_1, w_2, w_3\) be the cyclic order of the vertices around \(u\). Let \(G'\) be the triangulation obtained from \(G\) by removing \(u\) and its incident edges and adding the edge \(vw_2\), i.e., \(G'\) is the triangulation obtained by contracting the edge \(uv\). In \(G'\), there is a hamiltonian path \(P\) from \(v\) to \(w_2\). All edges in \(G'\) except the edge \(vw_2\) are also contained in \(G\), so \(P \cup \{uw_2\}\) is a hamiltonian path from \(u\) to \(v\) in \(G\).
\end{proof}

The lemma above gives no new information for triangulations where the decomposition tree is a path, since in that case we already know that adjacent vertices are connected by a hamiltonian path. The following two lemmata prove similar results for triangulations with a path as decomposition tree, but this time for certain vertices at distance two.

\begin{lemma}
Let \(G\) be a triangulation with a path as decomposition tree.
Let \(u\) be a vertex of degree 4, and let \(v_1,v_2,v_3,v_4\) be the cyclic order of the vertices around \(u\).
Let \(uv_1\) be a reducible edge of \(G\).
Then \(G\) has a hamiltonian path from \(v_1\) to \(v_3\).
\end{lemma}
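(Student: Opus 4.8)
The plan is to follow the template of the preceding lemma: contract the reducible edge \(uv_1\), find a suitable hamiltonian cycle in the smaller triangulation, and then translate it into a hamiltonian path of \(G\) from \(v_1\) to \(v_3\). The one extra task compared to the previous lemma is that we must re-insert the vertex \(u\) into the transferred cycle.

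First I would fix the local picture at \(u\). Since \(u\) has degree \(4\), its neighbours form a \(4\)-cycle \(v_1v_2v_3v_4\) and the faces incident with \(u\) are exactly \(uv_1v_2\), \(uv_2v_3\), \(uv_3v_4\) and \(uv_4v_1\). Note that \(v_1v_3\notin E(G)\): otherwise \(uv_1v_3\) would be a triangle having \(v_2\) strictly on one side and \(v_4\) strictly on the other, hence a separating triangle containing \(uv_1\), contradicting reducibility. Let \(G'\) be the triangulation obtained from \(G\) by contracting \(uv_1\); equivalently, \(G'=(G-u)+v_1v_3\), where the new edge \(v_1v_3\) bounds the two faces \(v_1v_2v_3\) and \(v_1v_3v_4\) of \(G'\). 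Because \(uv_1\) is reducible, the argument from the proof of the previous lemma shows that \(G'\) again has a decomposition tree which is a path.

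Next I would invoke Theorem~\ref{thm:JY:02_2edges} in \(G'\). The triangle \(v_1v_2v_3\) is a facial cycle of \(G'\), hence also a facial cycle of the unique piece \(H\) of the decomposition of \(G'\) that contains it; as the decomposition tree of \(G'\) is a path, \(H\) corresponds to a vertex of degree at most \(2\). Therefore \(G'\) has a hamiltonian cycle \(C'\) through the two edges \(v_1v_3\) and \(v_3v_2\) of that face. Deleting the edge \(v_1v_3\) from \(C'\) yields a hamiltonian path \(P'\) of \(G'\) from \(v_1\) to \(v_3\) that ends in the edge \(v_2v_3\). Every edge of \(P'\) is an edge of \(G'\) other than \(v_1v_3\), hence an edge of \(G-u\) and thus of \(G\); so \(P'\) is a path in \(G\) spanning \(V(G)\setminus\{u\}\). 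Replacing the last edge \(v_2v_3\) of \(P'\) by the two edges \(v_2u\) and \(uv_3\) (both present in \(G\), since \(u\) is adjacent to all of \(v_1,\dots,v_4\)) produces a hamiltonian path of \(G\) from \(v_1\) to \(v_3\), as desired.

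I expect the only genuinely delicate point to be the claim that contracting the reducible edge \(uv_1\) preserves the property that the decomposition tree is a path and keeps \(v_1v_2v_3\) a facial triangle of \(G'\); this is exactly the kind of bookkeeping already carried out in the proof of the preceding lemma, resting on the fact that a reducible edge lies in no separating triangle and in no chordless separating quadrangle. Everything after that is the routine cycle-to-path manipulation indicated above, together with the observation that the forced endpoint structure of \(C'\) obtained from Theorem~\ref{thm:JY:02_2edges} is precisely what allows \(u\) to be spliced back in.
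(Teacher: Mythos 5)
Your proposal is correct and follows essentially the same route as the paper: contract the reducible edge \(uv_1\), note the decomposition tree of \(G'\) is still a path, apply Theorem~\ref{thm:JY:02_2edges} to the new facial triangle \(v_1v_2v_3\), and splice \(u\) back in. The only (immaterial) difference is the choice of edge pair: the paper takes a cycle through \(v_1v_2\) and \(v_1v_3\) and reinserts \(u\) at the corner \(v_1\), while you take a cycle through \(v_2v_3\) and \(v_1v_3\) and reinsert \(u\) on the edge \(v_2v_3\); your explicit check that \(v_1v_3\notin E(G)\) is a welcome detail the paper leaves implicit.
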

\begin{proof}
Similar to the proof in the previous lemma, we can conclude that the triangulation \(G'\) obtained by contracting the edge \(uv_1\) in \(G\) has a path as decomposition tree. Owing to Theorem~\ref{thm:JY:02_2edges}, \(G'\) has a hamiltonian cycle \(C\) through \(v_1v_2\) and \(v_1v_3\). A hamiltonian path from \(v_1\) to \(v_3\) in \(G\) is \((C\setminus \{v_1v_2, v_1v_3\}) \cup \{vv_1, vv_2\}\).
\end{proof}

\begin{lemma}
Let \(G\) be a triangulation with a path as decomposition tree.
Let \(u\) be a vertex of degree 5, and let \(v_1,v_2,v_3,v_4,v_5\) be the cyclic order of the vertices around \(u\).
Let \(uv_1\) be a reducible edge of \(G\).
Then \(G\) has a hamiltonian path from \(v_1\) to \(v_3\), and from \(v_1\) to \(v_4\).
\end{lemma}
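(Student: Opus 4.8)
The plan is to imitate the proof of the preceding degree\nobreakdash-$4$ reducible-edge lemma: contract the reducible edge $uv_1$ and then appeal to Theorem~\ref{thm:JY:02_2edges}. First I would form the triangulation $G'$ obtained from $G$ by contracting $uv_1$ --- equivalently, delete $u$ together with its five incident edges and add the two edges $v_1v_3$ and $v_1v_4$ --- and keep writing $v_1$ for the merged vertex. Because $uv_1$ is reducible, $u$ and $v_1$ have no common neighbour other than the two face-neighbours $v_2$ and $v_5$ (a further common neighbour would place $uv_1$ on a separating triangle), so the contraction creates no multiple edge and $G'$ is a simple $3$-connected triangulation; and, exactly as in the earlier reducible-edge lemma, the contraction preserves the decomposition tree, so $G'$ still has a path as decomposition tree. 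Under the contraction the faces $uv_2v_3$, $uv_3v_4$, $uv_4v_5$ of $G$ become facial triangles $v_1v_2v_3$, $v_1v_3v_4$, $v_1v_4v_5$ of $G'$.

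For the path from $v_1$ to $v_3$ I would apply Theorem~\ref{thm:JY:02_2edges} to the facial triangle $v_1v_2v_3$ of $G'$: since the decomposition tree of $G'$ is a path, every piece has degree at most $2$, so in particular the piece containing this triangle does, and we obtain a hamiltonian cycle $C$ of $G'$ through the edges $v_1v_2$ and $v_1v_3$. As $v_1$ has degree $2$ on $C$ with its two $C$-edges going to $v_2$ and $v_3$, the edge $v_1v_4$ --- one of the only two edges in $E(G')\setminus E(G)$ --- does not lie on $C$, so after deleting $v_1v_2$ and $v_1v_3$ the remaining path uses only edges of $G$. Removing those two edges turns $C$ into a hamiltonian path of $G'$ from $v_2$ to $v_3$ together with the isolated vertex $v_1$, and re-attaching $v_1$ through $u$ by the genuine $G$-edges $uv_1$ and $uv_2$ yields $(C\setminus\{v_1v_2,v_1v_3\})\cup\{uv_1,uv_2\}$, a hamiltonian path of $G$ from $v_1$ to $v_3$. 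For the path from $v_1$ to $v_4$ I would argue symmetrically, using Theorem~\ref{thm:JY:02_2edges} on the facial triangle $v_1v_4v_5$ to get a hamiltonian cycle of $G'$ through $v_1v_4$ and $v_1v_5$ and then forming $(C\setminus\{v_1v_4,v_1v_5\})\cup\{uv_1,uv_5\}$; alternatively, reversing the cyclic order $v_1,v_2,v_3,v_4,v_5\mapsto v_1,v_5,v_4,v_3,v_2$ simply interchanges the two assertions.

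The only point that needs genuine care --- and it is the same point that already had to be dealt with in the two earlier reducible-edge lemmata --- is verifying that contracting $uv_1$ both avoids creating a multiple edge and leaves the decomposition tree a path, which is precisely where reducibility of $uv_1$ enters. Everything after that is bookkeeping: confirming that once the two chosen edges incident to $v_1$ are deleted the cycle $C$ contains no edge of $E(G')\setminus E(G)$, and that the two edges $uv_1$ and $uv_2$ (respectively $uv_1$ and $uv_5$) added back are genuine edges of $G$ around $u$.
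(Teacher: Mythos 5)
Your proof is correct and follows essentially the same route as the paper: contract the reducible edge \(uv_1\), observe that the decomposition tree remains a path, apply Theorem~\ref{thm:JY:02_2edges} to a facial triangle at the merged vertex, and lift the resulting hamiltonian cycle back into \(G\) through \(u\). The only (immaterial) difference is the choice of edges: you route the cycle through \(v_1v_2\) and \(v_1v_3\) and reattach \(u\) via \(uv_1\) and \(uv_2\), whereas the paper routes it through \(v_1v_3\) and \(v_3v_4\) and reattaches \(u\) via \(uv_3\) and \(uv_4\); both constructions yield the desired hamiltonian path.
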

\begin{proof}
We will only give the proof for the hamiltonian path from \(v_1\) to \(v_3\). The other proof is completely analogous.

Similar to the proof in the previous lemma, we can conclude that the triangulation \(G'\) obtained by contracting the edge \(uv_1\) in \(G\) has a path as decomposition tree. Owing to Theorem~\ref{thm:JY:02_2edges}, \(G'\) has a hamiltonian cycle \(C\) through \(v_1v_3\) and \(v_3v_4\). A hamiltonian path from \(v_1\) to \(v_3\) in \(G\) is \((C\setminus \{v_1v_3, v_3v_4\}) \cup \{vv_3, vv_4\}\).
\end{proof}

The program described above was compared with an independent implementation which checks for each pair of vertices whether a hamiltonian cycle exists in the graph obtained by adding a single vertex and connecting it to the two vertices in the pair. It was tested for 3-connected triangulations how many are hamiltonian-connected. Owing to Corollary~\ref{cor:path_every_edge}, only pairs of non-adjacent vertices need to be checked in case of two separating triangles, but no further optimisations were used in this implementation for testing purposes.

The program with the optimisations was used to test 3-connected triangulations with two separating triangles up to 22 vertices. For 22 vertices it was run on a cluster of Intel Xeon E5-2680 CPU's at 2.5 GHz. There were 21 282 658 291 triangulations with two separating triangles and the computations took about 22.7 CPU years. The results of this computation is the following lemma:

\begin{lemma}
On up to 22 vertices all 3-connected triangulations with two separating triangles are hamiltonian-connected.
\end{lemma}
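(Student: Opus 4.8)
Since the statement records the outcome of an exhaustive search, the plan is to (i) generate every 3-connected triangulation on $n$ vertices for $4 \le n \le 22$, keep those having exactly two separating triangles, and (ii) for each such triangulation certify hamiltonian-connectedness by producing, for every pair of vertices, a hamiltonian path joining them. Step (i) can be carried out with an existing generator of plane triangulations, and the filter ``exactly two separating triangles'' is a cheap local test, since a triangle of a triangulation is separating precisely when it is non-facial. As observed just before the lemma, such a graph has a path of length two as its decomposition tree, so all the optimisations developed in this section apply.

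For step (ii), first invoke Corollary~\ref{cor:path_every_edge}: every edge of $G$ lies on a hamiltonian cycle, so each adjacent pair of vertices is already joined by a hamiltonian path, and only the $\binom{n}{2} - |E(G)|$ non-adjacent pairs need to be examined. For those, run a backtracking hamiltonian-path search, but amortise the effort with the rotation lemma above: from a single hamiltonian path $x_1 \dots x_n$ one reads off a hamiltonian path between $x_{i-1}$ and $x_n$ for every neighbour $x_i$ of $x_1$, and iterating this over the newly produced paths (twice, in practice) settles the large majority of pairs with no further search. Ordering the pairs so that vertices of largest degree are used as endpoints first maximises what the rotation lemma yields. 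The distance-two lemmas for reducible edges at degree-$4$ and degree-$5$ vertices dispatch still more pairs outright, and whatever remains is decided by the direct search.

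To guard against the real danger in a computation of this kind — an implementation bug — the whole run should be reproduced by a logically independent program: for each pair $\{x,y\}$ add a new vertex adjacent to $x$ and $y$ and test the enlarged graph for a hamiltonian cycle through the two new edges; $G$ is hamiltonian-connected if and only if this succeeds for every pair. The two programs must agree on every triangulation, and the optimised program must agree with the naive one on all the (smaller) cases the latter can reach.

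The main obstacle is scale rather than mathematics: there are over $2.1 \times 10^{10}$ triangulations with two separating triangles on up to $22$ vertices, so the per-graph certificate has to be fast enough that the total (on the order of tens of CPU years) is feasible, and the enumeration must be partitioned into independent jobs suitable for a cluster. Since tractability rests entirely on the speed-ups, each lemma invoked above has to be verified carefully before being relied on in the search.
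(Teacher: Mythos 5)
Your proposal matches the paper's approach: the lemma is established by exhaustive computation, filtering generated triangulations by their separating triangles, exploiting Corollary~\ref{cor:path_every_edge} (valid here since two separating triangles force a path decomposition tree) to dismiss adjacent pairs, amortising the remaining searches with the rotation lemma and the reducible-edge lemmas, and cross-checking against an independent implementation that adds a new vertex joined to each pair and tests for a hamiltonian cycle. This is essentially the same argument, so no further comparison is needed.
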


The program with the optimisations was also used to test 3-connected triangulations with three separating triangles up to 21 vertices. For 21 vertices it was run on a cluster of Intel Xeon E5-2680 CPU's at 2.5 GHz. There were 8 751 268 952  triangulations with three separating triangles and the computations took about  6.3 CPU years. The result of this computation is the following lemma:

\begin{lemma}
On up to 21 vertices all 3-connected triangulations with three separating triangles are hamiltonian-connected.
\end{lemma}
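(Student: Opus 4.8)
The plan is to settle the statement by an exhaustive computer search that exploits the structural restrictions forced by having exactly three separating triangles. First I would generate, for every $n\le 21$, the complete list of $3$-connected simple plane triangulations on $n$ vertices with exactly three separating triangles, using \texttt{plantri} with its separating-triangle filter, and record the counts so they can be re-derived by an independent recount. For each such triangulation I would compute its Jackson--Yu decomposition tree; as observed above, with three separating triangles this tree is either a path on four vertices or the star $K_{1,3}$, and the verification would branch on these two cases.

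In the path case, Corollary~\ref{cor:path_every_edge} already yields a hamiltonian cycle through every edge, hence a hamiltonian path between every adjacent pair, so only non-adjacent pairs remain; among those, the distance-two lemmata for vertices of degree $4$ or $5$ incident with a reducible edge eliminate further pairs, and the lemma reducing a missing path to a path in a triangulation on $n-1$ vertices with the same decomposition tree makes the search proceed inductively in $n$. In the $K_{1,3}$ case Corollary~\ref{cor:path_every_edge} no longer applies, but the lemma bounding non-existence of a $w_1$-to-$w_2$ path by a path in a triangulation with at most the same number of separating triangles, together with the degree-$4$ reducible-edge lemma, still prunes heavily; the remaining pairs — plus a random sample of the pruned ones, as a sanity check — would be attacked directly by a backtracking search for a hamiltonian path with prescribed endpoints. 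To shrink the workload further I would sort the vertices by decreasing degree, handle the high-degree pairs first, and apply the rotation lemma (producing a hamiltonian path from $x_{i-1}$ to $x_n$ from one from $x_1$ to $x_n$) repeatedly, twice on each newly found path, to obtain extra pairs for free. For robustness the whole computation would be mirrored by a logically independent program that, for each pair $\{x,y\}$, adds a new vertex adjacent to $x$ and $y$ and tests that graph for hamiltonicity — equivalent to the existence of a hamiltonian $x$--$y$ path in $G$ — with the two programs required to agree on all inputs up to a smaller bound.

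The main obstacle is scale rather than mathematical depth: at $n=21$ there are on the order of $10^{10}$ triangulations, and a naive pair-by-pair search is hopeless, so the result depends on the pruning lemmata actually removing the large majority of pairs, on a well-tuned hamiltonicity/hamiltonian-path backtracker with aggressive early cutoffs, and on distributing the (embarrassingly parallel) search over a cluster and recording the CPU-time usage. A secondary concern is guarding against programming errors, which is precisely why the independent cross-check and the \texttt{plantri} recount are built into the plan.
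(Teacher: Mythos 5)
Your plan is essentially identical to the paper's: the lemma is established by an exhaustive computer search over all $8\,751\,268\,952$ triangulations with three separating triangles on up to $21$ vertices, using the same pruning lemmata (the rotation lemma applied twice per found path, degree-sorted endpoints, the reducible-edge and edge-swap reductions, and Corollary~\ref{cor:path_every_edge} in the path case), distributed over a cluster and cross-checked against an independent implementation that adds a vertex joined to each pair and tests hamiltonicity. No substantive difference from the paper's approach.
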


We also tested 3-connected triangulations with more than 3 separating triangles, but with specific decomposition trees. We verified hamiltonian-connectedness for triangulations with a path as decomposition tree up to 21 vertices. For 21 vertices it was run on a cluster of Intel Xeon E5-2680 CPU's at 2.5 GHz. There were 10 141 293 048 triangulations with a path as a decomposition tree and the computations took about 5.4 CPU years.
The results of this computation is the following lemma:

\begin{lemma}
On up to 21 vertices all 3-connected triangulations with a path as decomposition tree are hamiltonian-connected.
\end{lemma}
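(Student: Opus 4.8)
\textit{Proof proposal.} The plan is to establish the statement by an exhaustive computer search over the class in question, combined with the structural reductions proved earlier in this section. First I would generate, for each \(n\) with \(6 \le n \le 21\), all 3-connected triangulations on \(n\) vertices whose decomposition tree is a path. These can be produced either with a planar-triangulation generator filtered by the separating-triangle structure, or by directly building triangulations through repeated substitution inside separating triangles in a way that keeps the decomposition tree a path; the number of triangulations output at each order should be cross-checked against independent enumeration data. For every such \(G\), the task is to certify that each pair of distinct vertices of \(G\) is joined by a hamiltonian path.

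To keep the search feasible, I would prune the set of pairs that must actually be tested. By Corollary~\ref{cor:path_every_edge}, every edge of \(G\) lies on a hamiltonian cycle, so every adjacent pair is already joined by a hamiltonian path and only non-adjacent pairs remain. Whenever a hamiltonian path is found, the rotation lemma above — applied iteratively, in practice a second time to each newly produced path — yields hamiltonian paths between many further pairs at no extra cost, and this is most effective if the vertices are processed in decreasing order of degree. The two lemmas handling a reducible edge at a vertex of degree 4 or 5 eliminate additional pairs at distance two a priori. For each pair that survives all of this, I would run a backtracking search for a hamiltonian path with the two prescribed endpoints, equivalently a hamiltonian-cycle search in \(G\) together with one extra vertex adjacent to both endpoints.

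Reliability is the main correctness concern, so the computation should be carried out with two independent programs — the optimised one just described and a straightforward one that, for each pair, merely tests hamiltonicity of \(G\) plus an apex vertex joined to that pair — and their verdicts compared on all instances. The main obstacle in practice is sheer volume: the number of triangulations with a path decomposition tree is on the order of \(10^{10}\) at \(n = 21\), so even after the pruning lemmas the per-instance hamiltonian-path search has to be very fast (bitset adjacency representation, aggressive pruning on vertex degrees and on the connectivity of the unused part of the graph, a good branching order) and the whole job must be distributed over a cluster. The genuine difficulty is thus engineering and bookkeeping — implementing a correct and efficient search, partitioning the workload, and collating the results — rather than any single mathematical step.
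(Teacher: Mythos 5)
Your proposal matches the paper's approach exactly: this lemma is established by an exhaustive computer search over all (roughly \(10^{10}\)) triangulations with a path decomposition tree on up to 21 vertices, using precisely the pruning devices you list (Corollary~\ref{cor:path_every_edge} for adjacent pairs, the iterated rotation lemma ordered by degree, the reducible-edge lemmas) and cross-checked against an independent apex-vertex implementation on a cluster. No further commentary is needed.
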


We also verified hamiltonian-connectedness for triangulations with a decomposition tree with maximum degree 3 up to 20 vertices. For 20 vertices it was run on a cluster of Intel Xeon E5-2680 CPU's at 2.5 GHz. There were 23 748 083 814 triangulations with a decomposition tree with maximum degree 3 and the computations took about 8.0 CPU years.
The results of this computation is the following lemma:

\begin{lemma}
On up to 20 vertices all 3-connected triangulations with a decomposition tree with maximum degree 3 are hamiltonian-connected.
\end{lemma}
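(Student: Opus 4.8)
The plan is purely computational: for each $n$ with $4 \le n \le 20$, enumerate all $3$-connected plane triangulations on $n$ vertices, retain exactly those whose decomposition tree has maximum degree at most $3$, and verify for each retained triangulation that every pair of distinct vertices is joined by a hamiltonian path.

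First I would generate the triangulations. The simple plane triangulations on at least four vertices are precisely the $3$-connected plane triangulations, so a standard generator such as \texttt{plantri} produces exactly the required graphs without duplicates. For each generated $G$ I would list its separating triangles, construct the Jackson--Yu decomposition tree by recursively splitting along separating triangles, and discard $G$ whenever this tree contains a vertex of degree at least $4$. (The sub-case in which the decomposition tree is a path is already covered by the previous lemma, but there is no harm in re-testing it here as a consistency check.)

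Second, for each retained $G$ on $n$ vertices I would run the hamiltonian-path search over pairs of vertices, exploiting the optimisations developed above. Concretely: sort the vertices by decreasing degree and process high-degree pairs first; each time a hamiltonian path $x_1\dots x_n$ is found, use the lemma on adjacency to $x_1$ to deduce hamiltonian paths for many further pairs in one stroke, iterating this deduction once or twice more on the newly obtained paths; whenever $G$ has a degree-$4$ vertex incident with a reducible edge, or exhibits the configuration of two facial triangles $uvw_1$, $uvw_2$ with $w_1\nsim w_2$ and $N(w_1)\cap N(w_2)=\{u,v\}$, invoke the corresponding reduction lemmas to certify the relevant pair without a fresh search. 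The remaining pairs are settled by a direct backtracking hamiltonian-path routine. For validation I would re-verify the outcome with the independent implementation that, for each pair $x,y$, tests whether the triangulation together with a new vertex adjacent to $x$ and $y$ has a hamiltonian cycle, and check that the two implementations agree on (at least) all smaller cases where the unoptimised checker is still feasible.

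The main obstacle is scale rather than mathematical subtlety: there are more than $2\times10^{10}$ triangulations to test at $n=20$, and deciding hamiltonian-connectedness is \textsf{NP}-hard in general, so the computation is feasible only with the reductions above together with heavy parallelisation, on the order of several CPU years on a cluster. The delicate points are the correctness of the decomposition-tree filter and of the hamiltonicity checker; both are guarded by the cross-check against the independent implementation and by running the unoptimised checker on the smaller orders.
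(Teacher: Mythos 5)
Your proposal matches the paper's approach exactly: this lemma is established purely computationally by enumerating all 3-connected triangulations on up to 20 vertices whose decomposition tree has maximum degree 3 (some $2.37\times 10^{10}$ graphs, about 8 CPU years on a cluster), applying the reduction and path-deduction lemmas from this section to prune pairs, and cross-checking against an independent unoptimised implementation. No further comment is needed.
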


\section{Conclusion}

It is known that 4-connected triangulations are hamiltonian-connected. We showed that also 3-connected triangulations with only one separating triangle are hamiltonian-connected. If we express this in terms of the decomposition tree, then we have that 3-connected triangulations with a decomposition tree with maximum degree at most 1 are hamiltonian-connected. In order to show bounds on the strongest form of this theorem, we proved that any decomposition tree with maximum degree at least 4 can appear as the decomposition tree of a 3-connected triangulation that is not hamiltonian-connected. This also means that if \(s\geq4\) that there is always a 3-connected triangulation with \(s\) separating triangles that is not hamiltonian-connected. Finally, we presented some computational results which show that all `small' 3-connected triangulations with a decomposition tree with maximum degree at most 3 are hamiltonian-connected.

\section*{Acknowledgements}

The author would like to thank Gunnar Brinkmann for the valuable feedback which drastically shortened the proof of Theorem~\ref{thm:sep_tri_share_edge}. The author would also like to thank Jasper Souffriau for providing him with the programs to determine the decomposition tree of a triangulation which were developed for \cite{BSVC:2015}.

The computational resources (Stevin Supercomputer Infrastructure) and services used in this work were provided by the VSC (Flemish Supercomputer Center), funded by Ghent University, the Hercules Foundation and the Flemish Government -- department EWI.


\begin{thebibliography}{1}

\bibitem{BSVC:2015}
G.~Brinkmann, J.~Souffriau, and N.~Van~Cleemput.
\newblock On the strongest form of a theorem of {W}hitney for hamiltonian
  cycles in plane triangulations.
\newblock {\em Journal of Graph Theory}, pages n/a--n/a, 2015, DOI: 10.1002/jgt.21915.

\bibitem{Ch:03}
Chiuyuan Chen.
\newblock Any maximal planar graph with only one separating triangle is
  hamiltonian.
\newblock {\em Journal of Combinatorial Optimization}, 7:p. 79 -- 86, 2003.

\bibitem{He:1988}
George~R.T. Hendry.
\newblock Scattering number and extremal non-hamiltonian graphs.
\newblock {\em Discrete Mathematics}, 71(2):p. 165 -- 175, 1988.

\bibitem{JY:02}
Bill Jackson and Xingxing Yu.
\newblock Hamilton cycles in plane triangulations.
\newblock {\em Journal of Graph Theory}, 41(2):p. 138--150, 2002.

\bibitem{Jung:1978}
Heinz~A. Jung.
\newblock On a class of posets and the corresponding comparability graphs.
\newblock {\em Journal of Combinatorial Theory, Series B}, 24(2):p. 125 -- 133,
  1978.

\bibitem{OV:14}
Kenta Ozeki and Petr Vrána.
\newblock 2-edge-hamiltonian-connectedness of 4-connected plane graphs.
\newblock {\em European Journal of Combinatorics}, 35:p. 432 -- 448, 2014.
\newblock Selected Papers of EuroComb'11.

\bibitem{Th:83}
Carsten Thomassen.
\newblock A theorem on paths in planar graphs.
\newblock {\em Journal of Graph Theory}, 7(2):p. 169--176, 1983.

\bibitem{Wh:31}
Hassler Whitney.
\newblock A theorem on graphs.
\newblock {\em The Annals of Mathematics}, 32(2):p. 378 -- 390, 1931.

\end{thebibliography}
\end{document}